\newtheorem{theorem}{Theorem}[section]
\newtheorem{lemma}[theorem]{Lemma}
\newtheorem{proposition}[theorem]{Proposition}
\theoremstyle{definition}
\newtheorem{remark}[theorem]{Remark}
\numberwithin{equation}{section}
\DeclareMathOperator{\Lk}{Lk} 
\DeclareMathOperator{\St}{St} 
\DeclareMathOperator{\CAT}{CAT} 
\DeclareMathOperator{\Aut}{Aut} 
\DeclareMathOperator{\Isom}{Isom} 
\DeclareMathOperator{\Fix}{Fix} 
\begin{document}

\title[$\CAT(0)$ Extensions of RACGs]{$\CAT(0)$ Extensions of Right-angled Coxeter Groups}

\author[Cunningham]{Charles Cunningham}
\address{Department of Mathematics; Bowdoin College;
Brunswick, Maine 04011}
\email{charles.cunningham@tufts.edu}


\author[Eisenberg]{Andy Eisenberg}
\address{Department of Mathematics; Oklahoma State University;
Stillwater, OK 74078}
\email{andy.eisenberg@okstate.com}

\author[Piggott]{Adam Piggott}
\address{Department of Mathematics; Bucknell University;
Lewisburg, Pennsylvania 17837}
\email{adam.piggott@bucknell.edu}
\thanks{This work was partially supported by a grant from the Simons Foundation (\#317466 to Adam Piggott)}

\author[Ruane]{Kim Ruane}
\address{Department of Mathematics; Tufts University;
Medford, Massachusetts 02155}
\email{kim.ruane@tufts.edu}

\subjclass[2010]{Primary 20F65, 20F55}

\keywords{}
\thanks {We thank the anonymous referee for their constructive suggestions which helped improve the clarity of our exposition.}

\begin{abstract}
We show that any split extension of a right-angled Coxeter group $W_{\Gamma}$ by a generating automorphism of finite order acts faithfully and geometrically on a $\CAT(0)$ metric space.
\end{abstract}

\maketitle

\section{Introduction}\label{sec:Introduction}

An isometric group action is \emph{faithful} if its  kernel is trivial, and it is \emph{geometric} if it is cocompact and properly discontinuous.  A finitely generated group $G$ is a \emph{CAT(0) group} if there exists a CAT(0) metric space $X$ equipped with a faithful geometric $G$-action.  The CAT(0) property is not an invariant of the quasi-isometry class of a group (see, for example, \cite{BridsonAlonso, KapovichLeeb} and \cite[p.~258]{MartinsBook}).  Whether or not it is an invariant of the abstract commensurability class of a group is
as yet unknown.  Attention was brought to this matter in \cite{PRW}.
In this article we illustrate that answering this question for any family of CAT(0) groups may require a variety of techniques.

It is well-known that an arbitrary right-angled Coxeter group $W$ is a CAT(0) group because it acts faithfully and geometrically on a CAT(0) cube complex $X$.  It is also well-known that the automorphism group $\Aut(W)$ is generated by three types of finite-order automorphisms.  As a natural source of examples we consider split extensions of right-angled Coxeter groups by finite cyclic groups, where in each case the cyclic group acts on $W$ as the group generated by one of these various generating automorphisms.  Our theorem is the following:

\begin{theorem}\label{thm:MainTheorem}
Suppose $W$ is a right-angled Coxeter group and $\phi \in \Aut(W)$ is either an automorphism induced by a graph automorphism, a partial conjugation, or a transvection.  Let $m$ denote the order of $\phi$.  Then the group $G = W \rtimes_\phi \mathbb{Z} / m\mathbb{Z}$ is a CAT(0) group.
\end{theorem}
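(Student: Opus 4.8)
The plan is to treat the three types of $\phi$ separately; as the introduction suggests, each requires a somewhat different idea, and only the first is routine.

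\emph{Graph automorphisms.} An automorphism $\alpha$ of $\Gamma$ permutes the defining generators and hence induces a cellular isometry $\hat\alpha$ of the $\CAT(0)$ cube complex $X$ on which $W$ acts faithfully and geometrically; on the vertex set $X^{(0)}=W$ one has $\hat\alpha(w)=\alpha(w)$. I would check that $W$ and $\hat\alpha$ generate an action of $G=W\rtimes_\phi\mathbb Z/m\mathbb Z$ on $X$ compatible with the semidirect-product relations. This action is faithful: an element fixing every vertex fixes $1\in W$, forcing its $W$-coordinate to be trivial, and then it acts as $\hat\alpha^k$, so $\phi^k=\mathrm{id}$ and $k\equiv 0\ (\mathrm{mod}\ m)$. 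It is geometric because $W\leq G$ with finite index.

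\emph{The other two types: a $\phi$-invariant visual splitting.} For a partial conjugation or a transvection, the plan is to exhibit a splitting of $W$ over a special subgroup that $\phi$ preserves, and then push it through the extension. If $\phi$ is the partial conjugation by $s$ on a union $\mathcal C$ of components of $\Gamma\setminus\St(s)$, then $\St(s)$ separates $\Gamma$ and gives a tree-shaped decomposition of $W$ as a graph of special subgroups with central group $W_{\St(s)}$, leaf groups $W_{\St(s)\cup\Gamma_j}$, and every edge group $W_{\St(s)}$; since $s$ is central in $W_{\St(s)}$, $\phi$ restricts to an inner automorphism on each leaf (by $s$ on the $\mathcal C$-leaves, trivial elsewhere) and to the identity on the central and edge groups, so it preserves the splitting. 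If $\phi$ is the transvection $s\mapsto sr$ with $r\in\Lk(s)$ and $\Lk(s)\subseteq\St(r)$, I would use instead the splitting $W=W_{\St(s)}*_{W_{\Lk(s)}}W_{\Gamma\setminus\{s\}}$: because $sr\in W_{\St(s)}$ and $\phi$ fixes the remaining generators, $\phi$ preserves it. In either case $G$ becomes the fundamental group of a tree of groups whose vertex and edge groups are, up to a finite factor, right-angled Coxeter groups: in the partial-conjugation case each is of the form $W_\Lambda\times\mathbb Z/m\mathbb Z$ (again right-angled Coxeter), and in the transvection case the one exceptional vertex group $W_{\St(s)}\rtimes_\phi\mathbb Z/2$ collapses (using that $s$ and $r$ are central in $W_{\St(s)}$) to the product of $W_{\Lk(s)\setminus\{r\}}$ with the dihedral group of order eight, which acts faithfully and geometrically on a Euclidean square. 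Hence each vertex and edge group is a $\CAT(0)$ group and the edge groups sit inside the vertex groups as convex-cocompact subgroups.

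\emph{Assembling a $\CAT(0)$ space --- the hard part.} What remains, and where I expect the real difficulty to lie, is to build from these pieces one $\CAT(0)$ space carrying a faithful geometric $G$-action. One would like to glue the $\CAT(0)$ cube complexes of the vertex groups along copies of the edge complexes over the Bass--Serre tree of the splitting, obtaining a $\CAT(0)$ cube complex because the graph of groups is a tree and the gluings are along convex subcomplexes. The obstruction is that $\phi$ is realized on each leaf only up to an inner automorphism of that leaf, and the extra reflection this introduces makes an edge group act on the naive edge complex in two incompatible ways at its two ends (for instance the central involution of the dihedral factor is a rotation of the square on one side of an edge but a wall reflection on the other). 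The plan to circumvent this is to enlarge the vertex and edge spaces --- replacing Davis complexes by suitable products with intervals, and by the square on which the dihedral group acts --- so that the relevant finite subgroups act on each edge space in a way extending over both adjacent vertex spaces, and then to glue these enlarged $\CAT(0)$ cube complexes along the convex subcomplexes carrying the now genuinely equivariant edge actions. This produces a $\CAT(0)$ cube complex on which $G$ acts; the action is faithful as long as the auxiliary interval and dihedral factors are kept faithful, and geometric because $W$ already acts geometrically. Two loose ends would need attention: the degenerate configurations in which the chosen visual splitting is trivial (e.g.\ when $s$ is adjacent to every other vertex of $\Gamma$), which I would handle separately by recognizing $G$ directly as a right-angled Coxeter group times a finite group; and the routine check of the link and convexity conditions after each gluing. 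The delicate compatibility-and-faithfulness bookkeeping at the edges is, in my estimation, the genuine obstacle; the reductions and the identification of the pieces are comparatively soft.
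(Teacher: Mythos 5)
Your first case (graph automorphisms) is exactly the paper's argument and is fine. For partial conjugations and transvections you take a genuinely different route from the paper: you split $W$ visually over $W_{\St(a)}$ (resp.\ $W_{\Lk(d)}$), observe that $\phi$ preserves the splitting, and express $G$ as a tree of groups whose vertex and edge groups are individually $\CAT(0)$ (your identifications of the pieces --- the leaf extensions becoming direct products with $\mathbb{Z}/2$ because $\phi$ is inner there, and the exceptional transvection vertex group becoming $W_{\Lk(d)\setminus\{a\}}$ times a dihedral group of order eight --- are all correct). The paper instead shows in the partial conjugation case that $G$ is \emph{itself} a right-angled Coxeter group on an explicit graph $\Lambda$, and in the transvection case passes to the index-two reflection subgroup $U=\ker h_a$, which is right-angled Coxeter on $S'=(S\setminus\{a\})\cup\{asa: s\notin\St(a)\}$, and builds a twisted ``projected left-multiplication'' action of all of $G$ on the cube complex of $U$ (with a small modification when $a$ is central). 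Your route, if completed, would be more uniform across the two cases; the paper's yields sharper structural information (e.g.\ that $G$ is again a RACG in the partial conjugation case).

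The problem is that your route is not completed, and the missing step is the whole content of the theorem. Knowing that $G$ is a tree of $\CAT(0)$ groups does not make $G$ a $\CAT(0)$ group: equivariant gluing requires, for each edge, a single $\CAT(0)$ edge space with a geometric edge-group action together with edge-group-equivariant isometric embeddings onto \emph{convex} subspaces of both adjacent vertex spaces, and you correctly observe that the naive choices fail (the element $r=(zs)^2$ is a fixed-point rotation of the square on one side of an edge but a wall reflection of a Davis complex on the other, and at the leaf edges $z$ acts as a product factor on one side but as $(za)\cdot a$, involving a reflection of the Davis complex, on the other). Your proposed repair --- ``enlarge the vertex and edge spaces ... so that the relevant finite subgroups act on each edge space in a way extending over both adjacent vertex spaces'' --- is a statement of what needs to be arranged, not a construction: no enlarged spaces are defined, no equivariant convex embeddings are exhibited, and no link condition is verified. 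Since you yourself locate ``the genuine obstacle'' precisely here, the proposal as written establishes only the (comparatively soft) algebraic decompositions and leaves the theorem unproved in two of the three cases. To salvage your approach you would need to carry out the gluing explicitly; alternatively, the paper's constructions show that the gluing can be avoided altogether.
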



What is most interesting is that $G$ is a CAT(0) group for different reasons in each of the three cases.  When  $\phi$ is an automorphism induced by a graph automorphism, the left-multiplication action $W \circlearrowright X$ extends to an action $G \circlearrowright X$; when $\phi$ is a partial conjugation, $G$ is itself a right-angled Coxeter group; when $\phi$ is a transvection, $G$ is not a right-angled Coxeter group and the action $W\circlearrowright X$ cannot extend to all of $G$, but we can explicitly construct a new CAT(0) space $Y$ and describe a faithful geometric action $G \circlearrowright Y$.

After necessary background material is described in Section \ref{sec:RACG}, the three cases of the theorem are treated, in turn, in Sections \ref{sec:graphautomorphism}, \ref{sec:partialconjugation} and \ref{sec:transvection}.

We also note that, in each case of the theorem, we take an extension $W_{\Gamma} \rtimes H$ where $H \leq \Aut(W_{\Gamma})$ is finite. In \cite{OtherPreprint}, we give an example in which $H$ is infinite and $W_{\Gamma} \rtimes H$ is not a right-angled Coxeter group.  We currently do not know whether such extensions with infinite $H$ are CAT(0) or not.  Since this question does not address the abstract commensurability of the CAT(0) property, we will not address it further in this paper.

\section{Right-angled Coxeter groups and their automorphisms}
\label{sec:RACG}

In this section we briefly recall a very small part of the rich combinatorial and geometric theory of right-angled Coxeter groups.  The interested reader may consult \cite{Davis} for a thorough account of the more general subject of Coxeter groups from the geometric group theory point of view.

Fix an arbitrary finite simple graph $\Gamma$ with vertex set $S$ and edge set $E$.  The \emph{right-angled Coxeter group defined by $\Gamma$} is the group $W = W_{\Gamma}$ generated by $S$, with relations declaring that the generators all have order 2, and adjacent vertices commute with each other. The pair $(W, S)$ is called a right-angled Coxeter system.  As described in \cite[Proposition 7.3.4, p.~130]{Davis}, we construct a cube complex $X = X(W, S)$ inductively as follows:
\begin{itemize}
\item The set of vertices is indexed by $W$, say $X^0 = \{v_w \mid w \in W\}$.
\item To complete the construction of the one-skeleton $X^1$ we add edges of unit length so that vertices $v_u, v_w$ are adjacent if and only if $u^{-1}w \in S$.
\item For each $k \geq 2$, we construct the $k$-skeleton by gluing in Euclidean unit cubes of dimension $k$ whenever $X^{k-1}$ contains the $(k-1)$-skeleton of such a cube.
\end{itemize}

\begin{remark}
We note the following about this construction:
\begin{itemize}
\item The dimension of $X$ equals the number of vertices in the largest clique in $\Gamma$.
\item The barycentric subdivision of $X$ is the well-known Davis complex $\Sigma = \Sigma(W, S)$.  By a result of Gromov, $\Sigma$, and hence also $X$, is a CAT(0) metric space (see \cite[Theorem 12.3.3, p.\ 235]{Davis} for a generalization due to Moussong).
\end{itemize}
\end{remark}

By construction, the geometry of $X$ is determined entirely by its 1-skeleton $X^1$.  It follows that a permutation $\sigma$ of the vertex set $X^0$ determines an isometry of $X$ if it respects the adjacency relation.  In particular, for all $w \in W$ the map $v_u \mapsto v_{wu}$ extends to an isometry $\Phi_w \in \Isom(X)$. The map $w \mapsto \Phi_w$  is a faithful geometric action $W \circlearrowright X$ known as the \emph{left-multiplication action}.

From the graph $\Gamma$ we may infer the existence of certain finite-order automorphisms of $W$.  For each vertex $a \in S$, we write $\Lk(a)$ for the set of vertices adjacent to $a$, and $\St(a)$ for $\Lk(a) \cup \{a\}$.
\begin{itemize}
\item Each graph automorphism $f \in \Aut(\Gamma)$ restricts to a permutation of $S$ which determimes an automorphism $\phi_f \in \Aut(W)$.
\item For each union of non-empty connected components $D$ of $\Gamma \setminus \St(a)$, the map
\[
s \mapsto \begin{cases} asa & s \in D, \\ s & s \in S \setminus D, \end{cases}
\]
determines an automorphism of $W$ called the \emph{partial conjugation with acting letter $a$ and domain $D$}.
\item If $a, d \in S$ are such that $\St(d) \subseteq \St(a)$, then the rule
\[
s \mapsto s \text{ for all } s \in S \setminus \{d\}, \;\;\; d \mapsto da,
\]
determines an automorphism of $W$ called the \emph{transvection with acting letter $a$ and domain $d$}.
\end{itemize}
Together, the automorphisms induced by graph automorphisms, the partial conjugations and the transvections comprise a generating set for $\Aut(W)$ \cite{LaurenceThesis}.
We note that partial conjugations and transvections are involutions, and graph automorphisms have finite order.

In what follows, $\phi \in \Aut(W)$ shall always denote a non-trivial automorphism of finite order $m$, and $G$ shall denote the semi-direct product $G = W \rtimes_\phi \mathbb{Z} / m\mathbb{Z}$.  So $G$ is presented by:
\begin{align*}
P_1 = \langle S \cup \{z\} &\mid s^2 = 1 \text{ for all } s \in S, [s,t] = 1 \text{ for all } \{s, t\} \in E, \\
&\quad z^m = 1, zsz^{-1} = \phi(s) \text{ for all } s \in S \rangle.
\end{align*}

\section{When $\phi$ is induced by a graph automorphism}\label{sec:graphautomorphism}

Suppose $\phi$ is induced by a graph automorphism $f \in \Aut(\Gamma)$. Then the map $v_w \mapsto v_{\phi(w)}$ preserves the adjacency relation in $X^1$, and hence determines an isometry $\Phi \in \Isom(X)$.  By simple computation the reader may confirm that the relations in the presentation $P_1$ are satisfied when each $s \in S$ is replaced by $\Phi_s$, and $z$ is replaced by $\Phi$.  Hence the rule
\[
s \mapsto \Phi_s \text{ for all } s \in S, z \mapsto \Phi,
\]
determines an action $G \circlearrowright X$.  We leave the reader to confirm that the action is faithful and geometric, and hence Theorem \ref{thm:MainTheorem} holds in the first of the three cases.

In fact, a stronger result holds for similar reasons.

\begin{lemma}\label{lem:graphautomorphismcase}
If $\mathcal{H} \leq \Aut(\Gamma)$ is the group of graph automorphisms and $H$ is the corresponding subgroup of $\Aut(W)$, then the natural action $W \circlearrowright X$ extends to a faithful geometric action $W \rtimes H  \circlearrowright X$.
\end{lemma}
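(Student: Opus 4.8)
\emph{Proof proposal.} The plan is to construct the extended action by hand, in direct analogy with the single-automorphism argument already given in Section~\ref{sec:graphautomorphism}, and then to read off faithfulness and the geometric property from the corresponding facts about the left-multiplication action $W \circlearrowright X$. First I would attach to each $f \in \mathcal{H}$ an isometry of $X$: since $f$ permutes $S$ it induces $\phi_f \in \Aut(W)$, and because $\phi_f$ is an automorphism we have $u^{-1}w \in S$ if and only if $\phi_f(u)^{-1}\phi_f(w) = \phi_f(u^{-1}w) \in S$, so the permutation $v_w \mapsto v_{\phi_f(w)}$ of $X^0$ respects the adjacency relation and extends to an isometry $\Phi_f \in \Isom(X)$. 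Two short computations (evaluate each side on a vertex $v_w$ or $v_u$ and use that $\phi$ is multiplicative in the appropriate sense) give $\Phi_f \circ \Phi_g = \Phi_{fg}$ and $\Phi_f \circ \Phi_w \circ \Phi_f^{-1} = \Phi_{\phi_f(w)}$ for all $w \in W$ and $f, g \in \mathcal{H}$. Noting also that $f \mapsto \phi_f$ is injective (if $\phi_f$ fixes every generator then $f = \mathrm{id}_\Gamma$), so that $H \cong \mathcal{H}$, these identities say exactly that the assignment $w \mapsto \Phi_w$, $f \mapsto \Phi_f$, respects the defining relations of $W \rtimes H$ and hence determines a homomorphism $W \rtimes H \to \Isom(X)$, i.e.\ an action $W \rtimes H \circlearrowright X$ whose restriction to $W$ is, by construction, the left-multiplication action.

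Next I would check faithfulness. If $(w, f) \in W \rtimes H$ acts trivially on $X$, then $w\,\phi_f(u) = u$ for every $u \in W$; taking $u = 1$ forces $w = 1$, and then $\phi_f(u) = u$ for all $u$, so $\phi_f = \mathrm{id}$ and $f = \mathrm{id}$. Hence the kernel of the action is trivial.

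Finally, the geometric property follows because $W$ is a finite-index subgroup of $W \rtimes H$ (of index $|\mathcal{H}|$) that already acts properly discontinuously and cocompactly on $X$: proper discontinuity passes to overgroups of finite index by a standard covering-of-cosets argument, and $X/(W \rtimes H)$ is a continuous image of the compact space $X/W$, hence compact. I do not expect a serious obstacle here; the only places calling for a moment's care are verifying the conjugation relation $\Phi_f \circ \Phi_w \circ \Phi_f^{-1} = \Phi_{\phi_f(w)}$ \emph{as isometries} of $X$ (this is what upgrades the naive assignment to an honest homomorphism out of the semidirect product) and the identification $H \cong \mathcal{H}$, both of which are brief.
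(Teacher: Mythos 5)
Your proposal is correct and follows exactly the route the paper intends: the paper leaves this lemma unproved, remarking only that it holds ``for similar reasons'' to the single-automorphism case, and your argument is precisely that case carried out for all of $\mathcal{H}$ at once, with the omitted verifications (the relations $\Phi_f\Phi_g=\Phi_{fg}$ and $\Phi_f\Phi_w\Phi_f^{-1}=\Phi_{\phi_f(w)}$, faithfulness, and the finite-index argument for the geometric property) filled in correctly. No issues.
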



\section{When $\phi$ is a partial conjugation}\label{sec:partialconjugation}


Now suppose that $\phi$ is the partial conjugation with acting letter $a$ and domain $D$.  Recall that $v_w$ denotes the vertex of $X$ indexed by the group element $w \in W$.  For any $d \in D$, $v_1$ and $v_d$ are adjacent in $X^1$, but $v_{\phi(1)}$ and $v_{\phi(d)}$ are not.  Since the map $v_w \mapsto v_{\phi(w)}$ does not respect adjacency in $X^1$, the left-muliplication action $W \circlearrowright X$ does not naturally extend to an action $G \circlearrowright X$.  However, $G$ is itself a right-angled Coxeter group, and hence also a CAT(0) group.

\begin{lemma}
\label{lem:extendbyPC}
If $\phi$ is a partial conjugation with acting letter $a$ and domain $D$, then $G$ is itself a right-angled Coxeter group.
\end{lemma}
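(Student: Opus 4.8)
The plan is to realise $G$ as the right-angled Coxeter group $W_{\Gamma'}$ for an explicit graph $\Gamma'$ built from $\Gamma$, $a$ and $D$. Since a partial conjugation is an involution, here $m=2$, so $G$ is presented by $P_1$ with $z^2=1$ and $zsz^{-1}=\phi(s)$ for all $s\in S$. Because $D\subseteq S\setminus\St(a)$ we have $\phi(a)=a$, hence $z$ and $a$ commute in $G$ and the element $b:=az=za$ satisfies $b^2=1$. Let $R:=S\setminus(\St(a)\cup D)$, the union of the connected components of $\Gamma\setminus\St(a)$ not contained in $D$; then $\Gamma$ has no edge joining a vertex of $D$ to a vertex of $R$, and no vertex of $D\cup R$ is adjacent to $a$.

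Now define a finite simple graph $\Gamma'$ with vertex set $(S\setminus\{a\})\sqcup\{b,z\}$ as follows: keep every edge of $\Gamma$ between two vertices of $S\setminus\{a\}$; join $b$ to $z$; join $b$ to each vertex of $\Lk(a)\cup D$; and join $z$ to each vertex of $\Lk(a)\cup R$. Informally, $\Gamma'$ is obtained from $\Gamma$ by \emph{splitting} the vertex $a$ into two adjacent vertices $b$ and $z$, where $b$ keeps the edges from $a$ to $\Lk(a)$ and to the $D$-side, $z$ keeps the edges from $a$ to $\Lk(a)$ and to the $R$-side, and the old $a$ reappears as the product $bz$. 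The claim is that $G\cong W_{\Gamma'}$, which gives the lemma.

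To establish this I would write down two homomorphisms and check that they are mutually inverse. Let $\Psi\colon G\to W_{\Gamma'}$ be induced by $s\mapsto s$ for $s\in S\setminus\{a\}$, $z\mapsto z$, and $a\mapsto bz$; and let $\Psi'\colon W_{\Gamma'}\to G$ be induced by $s\mapsto s$ for $s\in S\setminus\{a\}$, $z\mapsto z$, and $b\mapsto az$. That $\Psi'$ respects the defining relations of $W_{\Gamma'}$ is a short relation-by-relation verification: each such relation is a square or a commutation of adjacent vertices, and the only instances requiring more than a glance are $(az)^2=1$, $[az,z]=1$, $[az,x]=1$ for $x\in\Lk(a)$, and $[az,d]=1$ for $d\in D$, all of which follow from $[z,a]=1$ together with $zsz^{-1}=\phi(s)$; for instance $(az)d(az)=a(zdz)a=a(ada)a=d$. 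That $\Psi$ respects the relations of $P_1$ is equally routine, the one nontrivial instance being $zdz^{-1}=ada$ for $d\in D$: under $\Psi$ this becomes the identity $zdz=(bz)d(bz)$ in $W_{\Gamma'}$, which holds because $b$ is adjacent in $\Gamma'$ to both $d$ and $z$, so the second $b$ in $(bz)d(bz)=bzdbz$ may be slid to the left and cancelled against the first. Finally $\Psi'\circ\Psi$ and $\Psi\circ\Psi'$ fix every generator (for example $a\mapsto bz\mapsto(az)z=a$ and $b\mapsto az\mapsto(bz)z=b$), so $\Psi$ is an isomorphism and $G=W_{\Gamma'}$.

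I expect the one genuine difficulty to be discovering the graph $\Gamma'$, and in particular recognising that $a$ must be split into two vertices rather than supplemented by a single new vertex. If one tried instead to enlarge $\Gamma$ by a single new vertex, the image of $z$ would have to centralise $R$ and to conjugate $D$ by $a$ at the same time; since $a$ does not centralise $R$, this cannot be arranged using only squares and commutators as defining relations. Assigning the $D$-side of $\St(a)$ to $b=az$ (which, crucially, does centralise $D$) and the $R$-side to $z$, with $a=bz$, is exactly what resolves the conflict; once $\Gamma'$ has been correctly identified, the verifications above are short formal computations.
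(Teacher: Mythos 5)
Your proof is correct and takes the same approach as the paper: your graph $\Gamma'$ is exactly the graph $\Lambda$ described after Lemma \ref{lem:extendbyPC} (with your $z$ and $b=az$ playing the roles of $x$ and $ax$), and your relation-by-relation check of the mutually inverse homomorphisms is sound. The paper itself defers these verifications to \cite{OtherPreprint}, so you have simply supplied the details it omits.
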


We will omit the details of the proof, which may be found in \cite{OtherPreprint}.  In that paper, we engage more broadly with the problem of identifying a right-angled Coxeter presentation in a given group (or proving that no such presentation exists).  We find various families of extensions of right-angled Coxeter groups which are again right-angled Coxeter, and these include Lemma \ref{lem:extendbyPC} as a special case.

Here we will give a description of how to construct the defining graph $\Lambda$ for $G$ based on the original graph $\Gamma$.  The procedure is as follows:
\begin{enumerate}
\item Add a new vertex labeled $x$, which we connect to everything in $\Gamma \setminus D$.
\item Replace the label of vertex $a$ with the label $ax$, and add edges connecting $ax$ to each vertex in $D$.
\end{enumerate}
An example is shown in Figure \ref{fig:pcexample}.

\begin{figure}[h]
\begin{tikzpicture}[scale=0.7]

\node (a1) [circle,fill,inner sep=2pt,label=180:$a_1$] at (150:2) {};
\node (a2) [circle,fill,inner sep=2pt,label=90:$a_2$] at (110:1) {};
\node (a3) [circle,fill,inner sep=2pt,label=-90:$a_3$] at (0,0) {};
\node (a4) [circle,fill,inner sep=2pt,label=-90:$a_4$] at (2,0) {};
\node (a5) [circle,fill,inner sep=2pt,label=45:$a_5$] at (3,1) {};
\node (a6) [circle,fill,inner sep=2pt,label=0:$a_6$] at (4,0) {};

\draw (a1) to (a2) to (a3) to (a1);
\draw (a3) to (a4) to (a5);
\draw (a4) to (a6);

\node at (1,-1) {$\Gamma$};

\begin{scope}[xshift=9cm]
\node (a1) [circle,fill,inner sep=2pt,label=180:$a_1$] at (150:2) {};
\node (a2) [circle,fill,inner sep=2pt,label=90:$a_2$] at (110:1) {};
\node (a3) [circle,fill,inner sep=2pt,label=-90:$a_3$] at (0,0) {};
\node (a4) [circle,fill,inner sep=2pt,label=-90:$a_4$] at (2,0) {};
\node (a5) [circle,fill,inner sep=2pt,label=45:$a_5x$] at (3,1) {};
\node (a6) [circle,fill,inner sep=2pt,label=0:$a_6$] at (4,0) {};
\node (x) [circle,fill,inner sep=2pt,label=0:$x$] at (1,2.5) {};

\draw (a1) to (a2) to (a3) to (a1);
\draw (a3) to (a4) to (a5);
\draw (a4) to (a6);
\draw (x) to (a1);
\draw (x) to (a2);
\draw (x) to (a3);
\draw (x) to (a4);
\draw (x) to (a5);
\draw (a5) to (a6);

\node at (1,-1) {$\Lambda$};
\end{scope}

\end{tikzpicture}
\caption{$\Lambda$ is the defining graph of $W_{\Gamma} \rtimes \langle x \rangle$, where $x$ has acting letter $a_5$ and domain $\{a_6\}$.}
\label{fig:pcexample}
\end{figure}
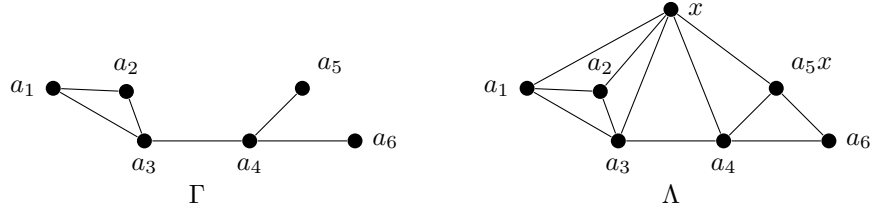

\section{When $\phi$ is a transvection}\label{sec:transvection}

Finally, we suppose that $\phi$ is the transvection with acting letter $a$ and domain $d$.
Recall that this means that $\St(d) \subseteq \St(a)$, and $\phi$ is determined by the rule:
\[
d \mapsto da, \text{ and } s \mapsto s \text{ for all } s \in S \setminus \{d\}.
\]

We note that $v_1$ and $v_d$ are adjacent in $X^1$, but $v_{\phi(1)}$ and $v_{\phi(d)}$ are not.  Since the map $v_w \mapsto v_{\phi(w)}$ does not respect adjacency in $X^1$, the left-multiplication action $W \circlearrowright X$ does not naturally extend to an action $G \circlearrowright X$. In fact, a stronger statement is true. It follows from \cite[Section 13.2]{Davis} that $\Fix(d)$ is a codimension 1 subspace of $\Sigma$, and $\Fix(da)$ is codimension 2.  Hence there is no isometry of $X$ which can conjugate the isometry representing $d$ to give the isometry representing $da$, so the left-multiplication action $W \circlearrowright X$ cannot be extended in any way to an action $G \circlearrowright X$.

We also note that $G$ does not embed in a right-angled Coxeter group since $G$ contains an element of order 4.  Since $xdx = ad$, we have that $(xd)^2 = a$ and $xd$ has order 4.  In a right-angled Coxeter group, any non-trivial element of finite order is an involution.

It seems that to show that $G$ is a CAT(0) group, we must identify a new CAT(0) space $Y$, and describe a faithful geometric action $G \circlearrowright Y$.  The key to our success in doing exactly this is the existence of a certain finite-index subgroup of $W$ which is itself a right-angled Coxeter group. Although the existence of such a subgroup is well-known (see \cite[Example 1.4]{BKS}, for example, where the analogous subgroup is used in the context of right-angled Artin groups), we provide the details here for completeness.

Let $h_a: W \to \mathbb{Z}/ 2 \mathbb{Z}$ denote the homomorphism determined by the rule: $a \mapsto 1$, and $s \mapsto 0$ for all $s \in S \setminus \{a\}$.  Let $U$ denote the kernel of $h_a$, and let
\[
S' = (S \setminus \{a\}) \cup \{asa \mid s \in S \setminus \St(a)\}.
\]

\begin{lemma}
The pair $(U, S')$ is a right-angled Coxeter system, and hence $U$ is a right-angled Coxeter group.  Further, conjugation by $a$ in $W$ restricts to an automorphism $\theta \in \Aut(U)$ induced by a permutation of $S'$; this automorphism is trivial if and only if $a$ is central in $W$.
\end{lemma}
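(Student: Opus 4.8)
The plan is to recognize $U$ as the right-angled Coxeter group on $\Gamma'$, where $\Gamma'$ is the graph with vertex set $S'$ constructed as follows: the vertices of $S \setminus \{a\}$ span an induced copy of $\Gamma \setminus \{a\}$, each vertex $asa$ (for $s \in S \setminus \St(a)$) is connected to $asa'$ whenever $s,s'$ are adjacent in $\Gamma$, and $asa$ is connected to $t \in S \setminus \{a\}$ whenever $s,t$ are adjacent in $\Gamma$. The key structural fact is that $U$ is generated by $S'$: every element of $U$ is a word in $S$ with an even number of occurrences of $a$, and by inserting $a^2 = 1$ between consecutive letters one rewrites such a word as a product of elements of $S'$ (formally, induct on the number of $a$'s, or use the standard observation that $U$ is the subgroup generated by the edges of the Cayley graph of $W/\langle\!\langle a\rangle\!\rangle$ lifted appropriately). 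Once $S'$ is seen to generate, one checks the relations: each generator in $S'$ has order $2$ (clear, since $asa$ is a conjugate of an involution), and two generators commute precisely when the corresponding vertices are adjacent in $\Gamma'$ — this is a direct computation using the commutation relations in $W$ together with the hypothesis that $s \notin \St(a)$ forces no relation between $s$ and $a$.

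The main obstacle is showing that $(U, S')$ has \emph{no further relations} — i.e. that the obvious surjection $W_{\Gamma'} \twoheadrightarrow U$ is injective. For this I would invoke a standard criterion: since $W_{\Gamma'}$ acts on its Davis complex and $U \le W$ acts on $\Sigma$, it suffices to exhibit an isometric embedding, or more simply to compute indices. A cleaner route is to use Deletion/Exchange-type reasoning or the known fact (cited from \cite{BKS} in the right-angled Artin setting, adaptable here) that the kernel of the ``mod 2 length in $a$'' map is exactly the RACG on this doubled graph; alternatively, one compares growth series or uses the fact that $[W:U] = 2$ together with a count showing $W_{\Gamma'}$ cannot be larger than $U$ via a normal form argument in $W_{\Gamma'}$ (every word in $S'$ maps to a word in $S$ with an even number of $a$'s, and the reduced-word theory of $W$ — Tits' solution to the word problem for Coxeter groups — ensures that a nontrivial reduced word in $S'$ maps to a nontrivial element of $W$). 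This last step is where the real work lies; I expect it to reduce to checking that a reduced expression in $(W_{\Gamma'}, S')$ yields, after the substitution $asa \mapsto asa$, an expression in $W$ to which no Coxeter reduction applies.

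Finally, for the automorphism statement: conjugation by $a$ is an automorphism of $W$, and it preserves $U$ since $h_a$ is invariant under conjugation (as $\mathbb{Z}/2\mathbb{Z}$ is abelian, $h_a(awa^{-1}) = h_a(w)$). It remains to see that $\theta = \mathrm{conj}_a|_U$ permutes $S'$: for $s \in S \setminus \St(a)$ we have $a(asa)a^{-1} = s$... wait, $s \notin S'$ in general, so instead observe $a(asa)a = s$ only when $s \in S'$; the correct statement is that $\theta$ fixes each $s \in (S\setminus\{a\}) \cap \St(a)$ (since such $s$ commutes with $a$), fixes $a s a \in S'$ sent to $a(asa)a = s$ — but $s \in S'$ iff $s \notin \St(a)$, so $\theta$ swaps $s \leftrightarrow asa$ for $s \in S\setminus\St(a)$, and fixes $\St(a)\setminus\{a\}$ pointwise. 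Hence $\theta$ is induced by an involutive permutation of $S'$, and it is trivial exactly when $S \setminus \St(a) = \emptyset$, i.e.\ when $a$ is adjacent to every other generator, which (since $a^2=1$) is equivalent to $a$ being central in $W$.
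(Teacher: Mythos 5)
Your outline correctly identifies all the pieces: $S'$ generates $U$ (insert $a^2$ between occurrences of $a$ in an even-$a$ word), each element of $S'$ is an involution, the commutation relations among $S'$ are those of the doubled graph $\Gamma'$, conjugation by $a$ preserves $U$ because $h_a$ is conjugation-invariant, and $\theta$ swaps $s \leftrightarrow asa$ for $s \in S \setminus \St(a)$ while fixing $\Lk(a)$ pointwise, hence is trivial iff $\St(a) = S$ iff $a$ is central. All of that matches the paper. However, the step you yourself flag as ``where the real work lies'' --- injectivity of $W_{\Gamma'} \twoheadrightarrow U$ --- is not actually carried out; you list several candidate strategies without executing any, and at least one of them does not work as stated: since $W_{\Gamma'}$ and $U$ are typically infinite, knowing $[W:U]=2$ gives no upper bound on $W_{\Gamma'}$ and so ``computing indices'' alone cannot yield injectivity. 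The Tits/reduced-word route is plausible but would require a genuine argument that a reduced $S'$-word, rewritten over $S$, admits no deletion; as written this is a gap, not a proof.

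The paper closes this gap by a different and cleaner device: it applies Tietze transformations to the standard presentation of $W$ (introducing $\widehat{s} = asa$ for $s \in S\setminus\St(a)$ together with the induced redundant relations) to obtain a presentation $P_2$ of $W$ over $S' \cup \{a\}$ that is \emph{visibly} the presentation of a semidirect product $W_{\Gamma'} \rtimes_\theta \langle a \rangle$, with $\theta$ the generator-permuting involution you describe. Recognizing the presentation of a semidirect product gives, for free, that the subgroup of $W$ generated by $S'$ is isomorphic to $W_{\Gamma'}$ (no hidden relations) and has index $2$, hence equals $\ker h_a = U$. If you want to salvage your approach without Tietze moves, the analogous fix is to build the homomorphism $W \to W_{\Gamma'} \rtimes_\theta \langle a \rangle$ in the reverse direction (check it on the defining relations of $W$) and verify that composing the two maps is the identity on generators; either way, some explicit two-sided comparison is needed, and that is precisely what is missing from your write-up.
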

\begin{proof}
If $a$ is central in $W$, then $S' = S \setminus \{a\}$, and the result is evident.  In this case conjugation by $a$ restricts to the trivial automorphism of $U$, and hence is the automorphism of $U$ induced by the trivial permutation of $S'$.

Suppose $a$ is not central in $W$.  An alternative presentation for $W$ may be constructed from the standard Coxeter presentation for $W$ by the following Tietze transformations:
\begin{itemize}
\item For each vertex $s \in S \setminus \St(a)$, introduce a new generator $\widehat{s}$, the defining relation $asa = \widehat{s}$, and redundant relations $a\widehat{s}a = s$ and $\widehat{s}^2 = 1$.
\item For each pair of adjacent vertices $s, t \in S \setminus \St(a)$, introduce the redundant relation $\widehat{s} \widehat{t} = \widehat{t} \widehat{s}$.
\item For each pair of adjacent vertices $s \in S\setminus \St(a)$ and $t \in \Lk(a)$, introduce the redundant relation $\widehat{s} t = t \widehat{s}$.
\item For each vertex $x \in \Lk(a)$, we rewrite the relation $xa=ax$ as $axa = x$.
\end{itemize}
The resulting presentation of $W$ is:
\begin{align*}
P_2 = \langle S' \cup\{a\} &\mid x^2 = 1 \text{ for all } x \in S', \\
&\quad [s,t] =1 \text{ for all } \{s, t\} \in E \textrm{ such that } s, t\neq a, \\
&\quad [\widehat{s}, \widehat{t}] = 1 \text{ for all } \{s, t\} \in E \textrm{ such that } s, t \in S\setminus \St(a),\\
&\quad [\widehat{s}, t] = 1 \text{ for all } \{s, t\} \in E \textrm{ such that } s \in S\setminus \St(a) \text{ and } t \in \Lk(a),\\
&\quad a^2 = 1, asa = s \text{ for all } s \in \Lk(a), \\
&\quad asa = \widehat{s} \text{ and } a \widehat{s} a = s \text{ for all } s \in S \setminus \St(a)\rangle.
\end{align*}
Evidently, this is the presentation of a semi-direct product in which the non-normal factor is $\langle a \rangle$, the normal factor is a right-angled Coxeter group with generating set
\[
S' = \left(S \setminus \{a\}\right) \cup \{\widehat{x} \mid x \in S \setminus \St(a)\},
\]
and $a$ acts on the normal factor as the automorphism $\theta$ induced by permuting the generators according to the rule
\[
x \mapsto \widehat{x} \text{ and } \widehat{x} \mapsto x \text{ for all } x \in S \setminus \St(a),  y \mapsto y \text{ for all } y \in \Lk(a).
\]
The action of $a$ on $U$ is non-trivial because $S \neq \St(a)$.
\end{proof}

We now have the following refined decomposition of $G$:
\[
G = \left(U \rtimes_\theta \langle a \rangle\right) \rtimes_\phi \langle z\rangle.
\]
A presentation $P_3$ for $G$ is obtained from the presentation $P_2$ for $W$ by appending the generator $z$ and relations
$$z^2 = 1, zsz = s \;\; \text{ for all } s \in S'\setminus \{d\}, zdz = da, zaz = a.$$

It follows that for each $g \in G$, there exist unique choices $u_g \in U$, and $\epsilon_g, \delta_g \in \{0, 1\}$, such that
$g = u_g a^{\epsilon_g} z^{\delta_g}$.  We shall write $Y$ for the CAT(0) cube complex on which $U$ acts geometrically and faithfully as defined in Section \ref{sec:RACG}, and we write $p: G \to U$ for the projection map $g \mapsto u_g$.  The projection map is not a homomorphism because for $s \in S \setminus \St(a)$ we have $p(a)p(s)p(a) = s \neq s' = p(s')$.  Even so, it allows us to parlay the left-multiplication action of $G$ on itself into an action of $G \circlearrowright Y$.

\begin{lemma}
For all $g \in S' \cup \{a, z\}$, the rule
\[
v_u \mapsto v_{p(gu)} \text { for all } u \in U,
\]
respects adjacency in $Y^1$, and hence determines an isometry $\Phi_g \in \Isom(Y)$.
\end{lemma}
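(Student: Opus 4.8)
The plan is to verify directly that for each generator $g \in S' \cup \{a, z\}$, the map $v_u \mapsto v_{p(gu)}$ is a bijection of $U^0$ that preserves the adjacency relation in $Y^1$; since the geometry of $Y$ is determined by $Y^1$ (as recalled in Section \ref{sec:RACG}), any such permutation extends to an isometry of $Y$. The key combinatorial fact to exploit is that $p$ restricted to the coset $U a^{\epsilon} z^{\delta}$ behaves predictably: if $g \in U$ then $p(gu) = gu$ genuinely, if $g = a$ then $p(au) = \theta^{-1}$-type twisting occurs according to the automorphism $\theta$, and if $g = z$ the twisting is governed by $\phi$ on the normal factor. So the argument really splits into three cases depending on which type of generator $g$ is.

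First I would handle $g \in S' \cup \{a\}$, i.e., $g \in W$. Here $p(gu)$ is computed by writing $gu = w a^{\epsilon_w} z^{\delta_w}$; since $g, u \in W$ we have $\delta_{gu} = 0$, and $gu \in W = U \rtimes_\theta \langle a\rangle$, so $p(gu) = u_{gu}$ where $gu = u_{gu} a^{\epsilon_{gu}}$. For $g \in S' \subseteq U$ this is literally left-multiplication by $g$ in $U$, which is the isometry $\Phi_g$ of the left-multiplication action $U \circlearrowright Y$ from Section \ref{sec:RACG}; nothing new is needed. For $g = a$, note $a u = (a u a) a = \theta(u) a$, so $p(au) = \theta(u)$; thus the map is $v_u \mapsto v_{\theta(u)}$. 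Since $\theta$ is induced by a permutation of the generating set $S'$ (by the previous lemma), it respects adjacency in $Y^1$ — this is exactly the graph-automorphism phenomenon, and it determines an isometry.

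Next I would handle $g = z$. Using the presentation $P_3$, for $u \in U$ we have $zu = (zuz)z = \phi|_U(u) \cdot z$ where $\phi|_U$ denotes the effect of conjugation by $z$ on $U$; but $\phi(u) = zuz$ need not lie in $U$ — indeed $\phi(d) = da \notin U$. So I would compute $p(zu)$ carefully: write $u$ as a word in $S'$, conjugate letter by letter, and observe that the only letter moved out of $U$ is $d$ (replaced by $da$), and each occurrence of $a$ that accumulates can be collected to the right using $\theta$-conjugation, leaving $p(zu) \in U$ obtained from $u$ by applying $\phi$ and then re-expressing modulo $\langle a\rangle$. The claim then is that this composite map $u \mapsto p(zu)$ is again realized by a permutation of $S'$ (up to the $U$-action), hence respects adjacency. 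The main obstacle is precisely this case: one must check that the $a$'s introduced when conjugating by $z$ interact correctly with the structure of $U$ so that $p(z \cdot)$ is adjacency-preserving — in effect, that the non-homomorphic nature of $p$ does not destroy the cube-complex geometry. I would verify this by checking the two adjacency directions: if $u^{-1}u' \in S'$ then $p(zu)^{-1} p(zu') \in S'$, and conversely; bijectivity follows since $z$ has order $2$ and $g \mapsto (v_u \mapsto v_{p(gu)})$ will be seen to be multiplicative enough on generators to give an inverse. Once all three cases are dispatched, the lemma follows immediately from the adjacency-implies-isometry principle.
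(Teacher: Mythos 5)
Your strategy matches the paper's exactly: case-split on the generator type, observe that $g \in S'$ gives literal left multiplication, that $g=a$ gives $v_u \mapsto v_{\theta(u)}$ with $\theta$ a permutation of $S'$ (so adjacency is preserved since $\theta(u)^{-1}\theta(us) = \theta(s) \in S'$), and that the only substantive case is $g=z$, where the parity of $d$ in $u$ governs whether an $a$ is left over after pushing $z$ past $u$. The first two cases are complete as written.

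The problem is that for $g=z$ --- the heart of the lemma --- you have only restated the goal (``I would verify that $u^{-1}u' \in S'$ implies $p(zu)^{-1}p(zu') \in S'$'') rather than verified it, and the justification you offer (that $u \mapsto p(zu)$ is ``realized by a permutation of $S'$'') is not literally true: that map is not induced by any single permutation of $S'$, since it acts as $\phi$ on $h_d^{-1}(0)$ and as $\phi(\cdot)a$ on $h_d^{-1}(1)$, and it is not a homomorphism. What is needed is the explicit four-way computation, splitting on $h_d(u) \in \{0,1\}$ and on whether $s=d$: with $p(zu) = \phi(u)a^{h_d(u)}$, the element $p(zu)^{-1}p(zus)$ comes out to $s$, to $\phi(d)a = daa = d$, to $asa = \theta(s)$, or to $ada = d$, respectively, and each of these lies in $S'$. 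The last case uses that $d \in \Lk(a)$ (which holds because $\St(d) \subseteq \St(a)$ and $d \ne a$), and the third uses that conjugation by $a$ permutes $S'$; neither fact is recorded in your sketch, and without them the claim that the output lands in $S'$ is unsupported. Writing out these four lines turns your outline into the paper's proof. (Your concern about bijectivity is reasonable but is settled afterwards, when one checks that the $\Phi_g$ satisfy the relations of $P_3$, in particular $\Phi_z^2 = 1$.)
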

\begin{proof}
Let $u \in U$, $s \in S'$ and $g \in S' \cup \{a, z\}$.  To prove the result we must establish that $v_{p(gu)}$ and $v_{p(gus)}$ are adjacent. For this it suffices to show that $(p(gu))^{-1}p(gus) \in S'$.

If $g \in S'$, then
\[
(p(gu))^{-1}p(gus) = (gu)^{-1}gus = s \in S'.
\]
If $g = a$, then
\begin{align*}
(p(au))^{-1}p(aus) &= (p(\theta(u)a))^{-1}p(\theta(us)a) \\
&= (\theta(u))^{-1} \theta(us) \\
&= \theta(s) \in S'.
\end{align*}

Finally, we consider the case $g = z$.  We note that if $d$ occurs an even number of times in any word for $u$, then $a$ occurs an even number of times in any word for $\phi(u)$, and $p(zu) = \phi(u)$. If, on the other hand, $d$ occurs an odd number of times in any word for $u$, then $a$ occurs an odd number of times in any word for $\phi(u)$, and $p(zu) = \phi(u)a$.  The parity of $d$ in a group element $u \in U$ is identified by the homomorphism $h_d: U \to \mathbb{Z} / 2\mathbb{Z}$ determined by the rule $d \mapsto 1$, and $s \mapsto 0$ for all $s \in S'\setminus\{d\}$.  Therefore, we consider cases based on the value of $h_d(u)$, and whether or not $s = d$.

If $h_d(u) = 0$ and $s \neq d$, then
$$(p(zu))^{-1} p(zus) = (\phi(u))^{-1} \phi(us) = s \in S'.$$


If $h_d(u) = 0$ and $s = d$, then
$$(p(zu))^{-1} p(zud) = (\phi(u))^{-1} \phi(ud)a = \phi(d)a = d \in S'.$$


If $h_d(u) = 1$ and $s \neq d$, then
$$(p(zu))^{-1} p(zus) = (\phi(u)a)^{-1} \phi(us)a = a \phi(u)^{-1} \phi(u) s a = asa = \theta(s) \in S'.$$


If $h_d(u) = 1$ and $s = d$, then
$$(p(zu))^{-1} p(zud) = (\phi(u)a)^{-1} \phi(ud) = a \phi(u)^{-1} \phi(u) da = ada = d \in S'.$$


Adjacency is respected in all cases, so the result holds in the case that $g = z$, and thus $\Phi_g$ is an isometry of $Y$ as required.
\end{proof}

In summary, we have that $G$ is presented by:
\begin{align*}
P_3 = \langle S' \cup\{a, z\} &\mid x^2 = 1 \text{ for all } x \in S', \\
&\quad [s,t] =1 \text{ for all } \{s, t\} \in E \textrm{ such that } s, t\neq a, \\
&\quad [\widehat{s}, \widehat{t}] = 1 \text{ for all } \{s, t\} \in E \textrm{ such that } s, t \in S\setminus \St(a),\\
&\quad [\widehat{s}, t] = 1 \text{ for all } \{s, t\} \in E \textrm{ such that } s \in S\setminus \St(a) \text{ and } t \in \Lk(a),\\
&\quad a^2 = 1, asa = s \text{ for all } s \in \Lk(a), \\
&\quad asa = \widehat{s} \text{ and } a \widehat{s} a = s \text{ for all } s \in S \setminus \St(a),\\
&\quad z^2 = 1, zsz = s \;\; \text{ for all } s \in S'\setminus \{d\}, zdz = da, zaz = a \rangle;
\end{align*}
and
\begin{align*}
&\Phi_s(v_u) = v_{su} \;\; \text{ for all } s \in S',\\
&\Phi_a(v_u) = v_{\theta(u)},\\
&\Phi_z(v_u) = v_{\phi(u)} \text{ if } h_d(u) = 0,\\
&\Phi_z(v_u) = v_{\phi(u)a} \text{ if } h_d(u) = 1.\\
\end{align*}

\begin{lemma}
The map
\[
g \mapsto \Phi_g \text{ for all } g \in S' \cup \{a, z\},
\]
determines a geometric action $G \circlearrowright Y$ which extends the left-multiplication action $U \circlearrowright Y$. If $a$ is not central in $W$, the action is faithful.  If $a$ is central in $W$, the kernel is the subgroup generated by $\{a, z\}$.
\end{lemma}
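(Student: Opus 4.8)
The plan is to upgrade the previous lemma to the statement that $g \mapsto \Phi_g$ extends to a homomorphism $\rho\colon G \to \Isom(Y)$ satisfying $\Phi_g(v_u) = v_{p(gu)}$ for \emph{all} $g \in G$ and $u \in U$, and then to read off the kernel from this formula. The essential observation is that the (non-homomorphic) projection $p$ is right-invariant under the letters $a$ and $z$: since $a^2 = z^2 = 1$ and $az = za$ in $G$, the normal form $g = u_g a^{\epsilon_g} z^{\delta_g}$ yields $p(ga^{\epsilon}z^{\delta}) = p(g)$ for all $g \in G$ and $\epsilon,\delta \in \{0,1\}$, whence $p\bigl(h\, p(wu)\bigr) = p(hwu)$ for every generator $h \in S' \cup \{a,z\}$, every word $w$ in these generators, and every $u \in U$. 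Combining this with the identity $\Phi_h(v_{u'}) = v_{p(hu')}$ from the previous lemma, an induction on the length of $w = g_1\cdots g_n$ gives $\Phi_{g_1}\circ\cdots\circ\Phi_{g_n}(v_u) = v_{p(wu)}$ for all $u \in U$. When $w$ represents the identity of $G$ this is $v_u$ for every $u$, so the composite isometry fixes $Y^0$ pointwise and is therefore the identity (an isometry of $Y$ is determined by its effect on $Y^0$). Hence every relator of $P_3$ is respected, $\rho$ is a well-defined homomorphism, and the displayed identity holds in general.

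Two of the three conclusions follow immediately. For $u_0 \in U$ one has $\Phi_{u_0}(v_u) = v_{p(u_0 u)} = v_{u_0 u}$, so $\rho$ extends the left-multiplication action $U \circlearrowright Y$, which is geometric and faithful. Since $[G:U] = [G:W]\,[W:U] = 4$ is finite, the $G$-action inherits geometricity: $G\backslash Y$ is a continuous image of the compact space $U\backslash Y$, and each vertex stabiliser $G_{v_u}$ meets $U$ trivially (as $U$ acts freely on $Y^0$), so $|G_{v_u}| \le 4$ and the action is properly discontinuous.

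It remains to compute $\ker\rho$. If $\Phi_g = \mathrm{id}$ then $v_{u_g} = v_{p(g)} = \Phi_g(v_1) = v_1$, so $u_g = 1$ and $g \in \langle a,z\rangle = \{1,a,z,az\}$; thus we need only decide which of $a$, $z$, $az$ act trivially, using $\Phi_a(v_u) = v_{\theta(u)}$ and $\Phi_z(v_u) = v_{\phi(u)}$ or $v_{\phi(u)a}$ according as $h_d(u) = 0$ or $1$. If $a$ is central in $W$, then $S' = S\setminus\{a\}$ and $\theta = \mathrm{id}$ by the preceding lemma, so $\Phi_a = \mathrm{id}$; and writing $u \in U$ as a word in $S\setminus\{a\}$ and sliding the central letter $a$ to the right shows $\phi(u) = u\,a^{h_d(u)}$, whence $\Phi_z(v_u) = v_u$ for all $u$ and $\Phi_z = \mathrm{id}$; therefore $\ker\rho = \langle a,z\rangle$, as claimed. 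If $a$ is not central, pick $t \in S\setminus\St(a)$, so that $t$ and $\widehat{t} = ata$ are distinct generators of $(U,S')$; short computations in $W$ then show $\Phi_a(v_{\widehat{t}}) = v_t \neq v_{\widehat{t}}$, that $\Phi_{az}(v_{\widehat{t}}) = v_{\theta(\widehat{t})} = v_t \neq v_{\widehat{t}}$ (since $h_d(\widehat{t}) = 0$ makes $\Phi_z$ fix $v_{\widehat{t}}$), and — using $\phi(\widehat{t}) = \widehat{t}$, $\phi(d) = da$ and the fact that $a$ commutes with $d$ — that $\Phi_z(v_{d\widehat{t}d}) = v_{\phi(d\widehat{t}d)} = v_{dtd} \neq v_{d\widehat{t}d}$. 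So none of $a$, $z$, $az$ lies in $\ker\rho$, and the action is faithful.

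The step I expect to be the main obstacle is this last computation: because $\Phi_z$ is defined piecewise, detecting its (non)triviality forces one to unwind $p(zu) = \phi(u)a^{h_d(u)}$ carefully, to compare $\phi(u)$ with $u$ inside $W$, and to exploit that $t$ and $\widehat{t} = ata$ stay distinct in $U$ exactly when $t \notin \St(a)$.
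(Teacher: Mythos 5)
Your proof is correct, and on the central verification it takes a genuinely different (and tidier) route than the paper. The paper checks that each of the eight relations of $P_3$ involving $a$ and $z$ is satisfied by the corresponding isometries, case by case (and leaves those computations to the reader); you instead prove the single global identity $\Phi_{g_1}\circ\cdots\circ\Phi_{g_n}(v_u) = v_{p(g_1\cdots g_n u)}$, using the right-invariance $p(g\,a^{\epsilon}z^{\delta}) = p(g)$ coming from the normal form and the fact that $a$ and $z$ are commuting involutions. This kills all relators at once and, as a bonus, hands you the formula $\Phi_g(v_u)=v_{p(gu)}$ for arbitrary $g\in G$, which makes the kernel computation transparent. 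What the paper's approach buys is locality (each relation is a two- or three-line computation with the explicit rules for $\Phi_s,\Phi_a,\Phi_z$); what yours buys is that nothing is left to the reader and the homomorphism property is structurally explained by the projection $p$. Your kernel analysis is essentially the paper's: both reduce to the stabilizer of $v_1$ lying in $\langle a,z\rangle$ and then test $a$, $z$, $az$ on explicit vertices --- the paper uses $v_s$ and $v_{ds}$ for $s\in S\setminus\St(a)$, while you use $v_{\widehat{t}}$ and $v_{d\widehat{t}d}$; both computations are valid (yours just uses a longer witness for $\Phi_z\neq\mathrm{id}$). You also supply the geometricity argument via $[G:U]=4$, which the paper's proof omits entirely, and you verify the central case ($\theta=\mathrm{id}$ and $\phi(u)=u\,a^{h_d(u)}$ force $\Phi_a=\Phi_z=\mathrm{id}$), which the paper only asserts. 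No gaps.
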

\begin{proof}
To prove that the map determines an isometric group action, we must prove that the relations in the presentation $P_3$ for $G$ hold when each $g \in S' \cup \{a, z\}$ is replaced by $\Phi_g$.  It is clear that those relations not involving either $a$ or $z$ remain true when each $g \in S'$ is replaced by $\Phi_g$.  We leave the reader to verify that the following relations hold (using the rules listed immediately before the statement of the lemma):
\begin{align*}
& \Phi_a^2 = 1,\\
& \Phi_a \Phi_s \Phi_a = \Phi_s \text{ for all } s \in \Lk(a),\\
& \Phi_a \Phi_s \Phi_a = \Phi_{\widehat{s}}  \text{ for all } s \in S \setminus \St(a),\\
& \Phi_a \Phi_{\widehat{s}} \Phi_a = \Phi_s \text{ for all } s \in S \setminus \St(a),\\
& \Phi_z^2 = 1, \\
& \Phi_z\Phi_s\Phi_z = \Phi_s \;\; \text{ for all } s \in S'\setminus \{d\},\\
& \Phi_z\Phi_d\Phi_z = \Phi_d\Phi_a,\\
& \Phi_z\Phi_a\Phi_z = \Phi_a.\\
\end{align*}

We note that, because $v_1 \mapsto  v_{p(g)}$, the stabilizer of $v_1$ is a subgroup of the finite abelian group $\langle a, z \rangle$.  If $a$ is not central in $W$, there exists $s \in S \setminus \St(a)$.  Computation shows that $\Phi_a, \Phi_{az}$ do not fix $v_{s}$, and $\Phi_z$ does not fix $v_{ds}$.  Our claims about the kernel of the action follow immediately.
\end{proof}




If $a$ is central in $W$, then there is no obvious way in which $a$ should act non-trivially on $Y$. We can, however, extend $Y$ to a new space $Y^+$ by appending two unit length edges in a ``v'' shape at each vertex, thereby providing pieces on which $a$ and $\phi$ can act non-trivially.  More formally, to construct $Y^+$ from $Y$ we write $v_u^0$ for $v_u$, and we append new vertices
\[
\left\{v_u^i \mid\text{for all } u \in U \text{ and } i \in \{-1, 1\} \right\},
\]
and new unit length edges
\[
\left\{\left\{v_u^0, v_u^{-1}\right\}, \left\{v_u^0, v_u^1\right\} \mid \text{for all } u \in U\right\}.
\]
It is evident that appending such ``v'' shapes at each vertex does not cause the CAT(0) property to fail, hence $Y^+$ is a CAT(0) cube complex.


\begin{proposition}
\label{Proposition:CasebIsCentral}
If $a$ is central in $W$, then $G$ acts faithfully and geometrically on $Y^+$.
\end{proposition}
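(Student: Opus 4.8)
The plan is to extend the action $G \circlearrowright Y$ of the preceding lemma --- which in the central case is geometric but has kernel exactly $\langle a, z\rangle$ --- to a geometric, faithful action on $Y^{+}$ in which $a$ and $z$ act non-trivially on the appended ``v'' shapes. First I would record the simplifications forced by $a$ being central: one has $S = \St(a)$, so $S' = S \setminus \{a\}$ and $U = W_{S'}$, the automorphism $\theta$ is trivial, the isometries $\Phi_a$ and $\Phi_z$ of $Y$ are the identity, and $p \colon G \to U$ becomes an honest surjective homomorphism (its only obstruction, vertices in $S \setminus \St(a)$, is now absent); together with the preceding lemma this gives $\ker p = \langle a, z\rangle \cong (\mathbb{Z}/2\mathbb{Z})^{2}$ and shows $G \circlearrowright Y$ factors as $G \xrightarrow{p} U \circlearrowright Y$.

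Next I would define the extended isometries. Writing $v_u^{0} = v_u$, for $g \in S' \cup \{a, z\}$, $u \in U$ and $i \in \{-1, 0, 1\}$ I set $\Phi_s(v_u^{i}) = v_{su}^{i}$ for $s \in S'$; $\Phi_a(v_u^{0}) = v_u^{0}$ with $\Phi_a(v_u^{\pm 1}) = v_u^{\mp 1}$; and $\Phi_z(v_u^{0}) = v_u^{0}$ together with $\Phi_z(v_u^{i}) = v_u^{i}$ when $h_d(u) = 0$ and $\Phi_z(v_u^{\pm 1}) = v_u^{\mp 1}$ when $h_d(u) = 1$. Each of these permutes the vertices of $Y^{+}$ respecting adjacency --- on the vertices $v_u^{0}$ and the cubes among them it is the old isometry of $Y$, and it carries the ``v'' at $v_u^{0}$ onto the ``v'' at its image --- so each determines an isometry of $Y^{+}$ extending $\Phi_g$.

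I would then verify that $g \mapsto \Phi_g$ respects the relations of $P_3$, so that it defines an action $G \circlearrowright Y^{+}$ extending $U \circlearrowright Y$. On the $v_u^{0}$'s and the cubes this is exactly the content of the preceding lemmas. On the ``v'' shapes each $\Phi_g$ acts by the base translation $u \mapsto p(g)u$ together with a ``swap sign'' $\mathrm{sw}_g \colon U \to \mathbb{Z}/2\mathbb{Z}$, with $\mathrm{sw}_s \equiv 0$ for $s \in S'$, $\mathrm{sw}_a \equiv 1$, and $\mathrm{sw}_z = h_d$; composites obey $\mathrm{sw}_{gh}(u) = \mathrm{sw}_g(p(h)u) + \mathrm{sw}_h(u)$, so each relation of $P_3$ reduces --- using that $p$ is a homomorphism --- to an elementary identity among these functions, e.g.\ $zdz = da$ holds because $p(zdz) = d = p(da)$ and $\mathrm{sw}_{zdz}(u) = h_d(du) + h_d(u) = 1 = \mathrm{sw}_{da}(u)$, via $h_d(du) = h_d(u) + 1$ and $h_d(su) = h_d(u)$ for $s \neq d$. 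For faithfulness, the kernel of the action fixes every $v_u^{0}$ and so, as in the preceding lemma, lies in $\langle a, z\rangle = \{1, a, z, az\}$; but $\Phi_a(v_1^{1}) = v_1^{-1}$, $\Phi_z(v_d^{1}) = v_d^{-1}$ and $\Phi_{az}(v_1^{1}) = v_1^{-1}$, so the kernel is trivial. For geometricity I would use the $G$-equivariant, proper, $1$-Lipschitz surjection $Y^{+} \to Y$ collapsing each appended edge to its base: since $G \circlearrowright Y$ is properly discontinuous and cocompact, and preimages of compacta under the collapse are compact, the action on $Y^{+}$ is properly discontinuous, and a compact fundamental domain for $Y$ together with the appended edges over its vertices is a compact fundamental domain for $Y^{+}$.

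The step I expect to be the main obstacle is identifying the right swap data for $\Phi_z$. Centrality essentially forces $\Phi_a$ to swap the two leaves at every vertex while acting trivially on $Y$, so $\Phi_z$ must act on the leaves differently from $\Phi_a$ --- otherwise $z$ and $a$ would act identically and the action would fail to be faithful --- yet it must still satisfy $\Phi_z^{2} = 1$ and $\Phi_z \Phi_d \Phi_z = \Phi_d \Phi_a$. Recognizing that the correct swap sign for $z$ is the homomorphism $h_d$ (the same parity that governed $\Phi_z$ on $Y$ in the non-central case) is the genuinely non-routine point; once it is in place, checking the remaining relations and the geometricity is bookkeeping.
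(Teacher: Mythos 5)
Your proposal is correct and takes essentially the same route as the paper: the isometries you define on $Y^{+}$ (left multiplication for $s \in S'$, $a$ flipping the ``v'' at every vertex, $z$ flipping the ``v'' at $v_u$ exactly when $h_d(u)=1$) are precisely the ones the paper uses. The paper dismisses the relation checks and geometricity as simple computations and leaves faithfulness implicit; your swap-sign bookkeeping, the kernel check inside $\langle a,z\rangle$, and the equivariant collapse $Y^{+}\to Y$ correctly supply those details.
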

\begin{proof}
Suppose that $a$ is central in $W$, i.e., that $\St(a) =\Gamma$.  Then $(U, S')$ is a right-angled Coxeter system, and $W = U \times \langle a\rangle$.

We now define a homomorphism $\Phi \colon G \to \Isom(Y^+)$. For each $s \in S'$, we declare $\Phi(s)$ to be the isometry determined by the rule:
\[
v_u^i \mapsto v_{su}^i \text{ for all }  u \in U \text{ and } i \in \{-1, 0, 1\}.
\]
We declare $\Phi(a)$ to be the isometry determined by the rule:
\[
v_u^{i} \mapsto v_u^{-i} \text{ for all } u \in U \text{ and } i \in \{-1, 0, 1\}.
\]
We declare $\Phi(z)$ to be the isometry determined by the rule:
\[
v_u^i \mapsto \begin{cases}
v_u^i & \text{if } h_d(u) = 0,\\
v_u^{-i} & \text{if } h_d(u) = 1, \\
\end{cases}
\]
for all $u \in U \text{ and } i \in \{-1, 0, 1\}.$  The maps can be described informally as follows: each $s \in S'$ acts on $Y^+$ in the way which most naturally extends the left-multiplication action $U \circlearrowright Y$; $a$ flips the ``v'' attached to every vertex; while $z$ flips only half the ``v'' shapes, because it flips the ``v'' attached to a vertex $v_u$ if and only if $d$ has an odd parity in $u$.

It is evident that the maps described above preserve adjacency in the one-skeleton of $Y^+$, and hence determine isometries of $Y^+$.  Simple computations confirm that these definitions respect the relations in the presentation $P_3$ of $G$ (some of the relations listed are vacuous).  Therefore these definitions do indeed determine an isometric action $G \circlearrowright Y^+$.   That the action is geometric follows easily from the fact that the left-multiplication action $U \circlearrowright Y$ is geometric.
\end{proof}

\bibliographystyle{plain}
\bibliography{CATZeroExtensionsOfRACGBib}
\end{document}